\theoremstyle{plain}
\newtheorem{theorem}{Theorem}[section]
\newtheorem{corollary}[theorem]{Corollary}
\newtheorem{proposition}[theorem]{Proposition}
\theoremstyle{definition}
\newtheorem{definition}[theorem]{Definition}
\begin{document}
\title{First-order logic with incomplete information}
\author{Antti Kuusisto\\
{\small Tampere University, University of Bremen}}

\date{}

\maketitle

\begin{abstract}
\noindent
We develop first-order logic and some extensions for incomplete information scenarios
and consider related complexity issues.
\end{abstract}

\section{Introduction}
We define (an extension of) first-order logic for
scenarios where the underlying model is not 
fully known. This is achieved by
evaluating a formula 
with respect to several models simultaneously,
not unlike in first-order modal logic.
The set (or even a proper class) of models is taken to
represent a collection of all possible models.
The approach uses some ingredients 
from Hodges' team semantics.
We shall not formally define what we mean by
incomplete information (or imperfect information for
that matter). However, we will not directly
investigate any variant of quantifier independence as in IF-logic
(which is sometimes referred to as first-order logic
with \emph{imperfect} information).
To demonstrate the defined framework from a
technical perspective we also provide a
complexity (of satisfiability) result that can be
easily extended to further similar systems not
formally studied here.

\section{First-order logic with incomplete information}

%
%
Let $\tau$ be a relational signature. Let $F(\tau)$ be
the smallest set such that the following conditions hold.
\begin{enumerate}
\item
For any $R\in\tau$, $Rx_1...x_k\in F(\tau)$.
Here $x_1,...,x_k$ are arbitrary variables (with 
possible repetitions) from a fixed countably infinite
set $\mathrm{VAR}$ of first-order variable symbols. $R$ is a $k$-ary
relation symbol.
\item
$x=y\in F(\tau)$ for all $x,y\in\mathrm{VAR}$.
\item
If $\varphi,\varphi'\in F(\tau)$, then $(\varphi\wedge\varphi')\in F(\tau)$.
\item
If $\varphi\in F(\tau)$, then $\neg\varphi\in F(\tau)$.
\item
If $x\in\mathrm{VAR}$ and $\varphi\in F(\tau)$, then $\exists x\varphi\in F(\tau)$.
%
%
%
%
%
%
\end{enumerate}
The above defines the exact syntactic version of first-order logic we
shall consider here.
The semantics of (this version of) first-order logic is here defined
with respect to \emph{$\tau$-interpretation classes}; a $\tau$-interpretation is a
pair $(\mathfrak{M},f)$ where $\mathfrak{M}$ is a $\tau$-model
and $f$ a finite function that maps a finite set of variable symbols
into the domain of $\mathfrak{M}$. 
A $\tau$-interpretation class is a
set (or a class) of $\tau$-interpretations with the functions $f$
having the same domain.
From now on we will only consider $\tau$-interpretation
classes that are sets and call these classes \emph{model sets}; we
acknowledge that a pair $(\mathfrak{M},f)$ is more 
than a model due to the function $f$, and indeed such pairs $(\mathfrak{M},f)$
are often called interpretations (while $f$ is an assignment).
Having acknowledged this issue, we shall not dwell on it any more,
and we shall even occasionally call pairs $(\mathfrak{M},f)$ models.
We note that a model set could also be called a \emph{model team} or
even an \emph{unknown model} (in singular indeed).
A \emph{choice
function} for a model set $\mathcal{M}$ is a 
function that maps each model $(\mathfrak{M},f)$ in $\mathcal{M}$ to
some element $a$ in the domain of $\mathfrak{M}$.
Recall that $h[a/b]$ denotes the function $h$
modified or extended so that $b$ maps to $a$.
If $F$ is a choice function, we let $\mathcal{M}[F/x]$ denote
the class
\begin{equation*}
\{(\mathfrak{M},f[F(\mathfrak{M},f)/x])
\, |\, (\mathfrak{M},f)\in\mathcal{M}\}.
\end{equation*}
We let $\mathcal{M}[\top/x]$ denote
the class
\begin{equation*}
\{(\mathfrak{M},f[b/x])
\, |\, (\mathfrak{M},f)\in\mathcal{M}\text{ and }b\in\mathit{Dom}(\mathfrak{M})\}.
\end{equation*}
The \emph{common domain} of a model set $\mathcal{M}$ is
the (possibly empty) intersection of the domains of the models in $\mathcal{M}$. If $A$
is any subset (including the empty set) of the common domain of $\mathcal{M}$,
we let $\mathcal{M}[A/x]$ denote
the class
\begin{equation*}
\{(\mathfrak{M},f[b/x])
\, |\, (\mathfrak{M},f)\in\mathcal{M}\text{ and }b\in A\}.
\end{equation*}
Recall that a \emph{constant function} is a function
that maps each input to the same element.
Thus a constant choice function for a model set $\mathcal{M}$ is a
choice function that 
maps each model to the 
same element in the intersection of the
domains of the models in $\mathcal{M}$).
(The empty function is not a constant choice function for
any other than the empty model set.)
Let $\mathcal{M}$ be a $\tau$-interpretation class, i.e., a model set.
The semantics of first-order logic (with incomplete information) is defined as follows.

\[
\begin{array}{ll}
\mathcal{M}\models^+ Rx_1...x_k\
&\text{ iff }\ (f(x_1),...,f(x_k))\in R^{\mathfrak{M}}\text{ for all }
(\mathfrak{M},f)\in\mathcal{M}\\
\mathcal{M}\models^+ (\varphi\wedge\psi)
&\text{ iff }\ \mathcal{M}\models^+ \varphi
\text{ and }\mathcal{M}\models^+ \psi\\
\mathcal{M}\models^+ \neg\varphi
&\text{ iff }\ \mathcal{M}\models^-\varphi\\
\mathcal{M}\models^+ \exists x \varphi
&\text{ iff }\ \mathcal{M}[F/x]\models^+ \varphi
\text{ for some choice}\\
&$ $\ \ \ \ \ \text{function for }\mathcal{M}\\
\mathcal{M}\models^- Rx_1...x_k\
&\text{ iff }\ (f(x_1),...,f(x_k))\not\in R^{\mathfrak{M}}\text{ for all }
(\mathfrak{M},f)\in\mathcal{M}\\
\mathcal{M}\models^- (\varphi\wedge\psi)
&\text{ iff }\ \mathcal{M'}\models^- \varphi
\text{ and }\mathcal{M''}\models^- \psi
\ \ \ \text{ for some }\\
&\text{ }\ \ \ \ \, \mathcal{M',M''}\text{ s.t. }\mathcal{M'}
\cup\mathcal{M''} = \mathcal{M}.\\
\mathcal{M}\models^- \neg\varphi
&\text{ iff }\ \mathcal{M}\models^+\varphi\\
\mathcal{M}\models^- \exists x \varphi
&\text{ iff }\ \mathcal{M}[\top/x]\models^- \varphi\\
%
%
%
%
%
%
%
%
%
%
%
\end{array}
\]
Technically this logic (first-order logic
with incomplete information) adds very little to standard
first-order logic: the semantics has simply been
lifted to the level of \emph{sets} of models (or
sets of pairs $(\mathfrak{M},f)$), as
the following Proposition shows.
However, conceptually the 
difference with standard first-order logic
approach is clear, and further meaningful divergence 
can be expected to arise in the study of
extensions of this base formalism.
The following proposition is easy to prove.
\begin{proposition}\label{fok}
Let $\varphi$ be an $\mathrm{FO}$-formula.
Then we have
\begin{itemize}
\item
$\mathcal{M}\models^+\varphi$ iff
$(\mathfrak{M},f)\models_{\mathrm{FO}}\varphi$
for all $(\mathfrak{M},f)\in\mathcal{M}$,
\item
$\mathcal{M}\models^-\varphi$ iff
$(\mathfrak{M},f)\not\models_{\mathrm{FO}}\varphi$ for
all $(\mathfrak{M},f)\in\mathcal{M}$.
\end{itemize}
\end{proposition}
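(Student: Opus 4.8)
The plan is to prove both statements simultaneously by structural induction on the formula $\varphi$, since the positive satisfaction relation $\models^+$ and the negative relation $\models^-$ are defined by mutual recursion (the clauses for $\neg$ and $\wedge$ refer to each other across the two relations). So the induction hypothesis must be the full biconditional: for every model set $\mathcal{M}$, both $\mathcal{M}\models^+\varphi$ iff $(\mathfrak{M},f)\models_{\mathrm{FO}}\varphi$ for all $(\mathfrak{M},f)\in\mathcal{M}$, and $\mathcal{M}\models^-\varphi$ iff $(\mathfrak{M},f)\not\models_{\mathrm{FO}}\varphi$ for all $(\mathfrak{M},f)\in\mathcal{M}$. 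Proving these together is essential because the $\neg$ case collapses one claim into the other.

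First I would dispatch the atomic cases. For $Rx_1\dots x_k$ and $x=y$ the positive clause says the tuple lies in $R^{\mathfrak{M}}$ (respectively $f(x)=f(y)$) for \emph{all} pairs in $\mathcal{M}$, which is literally the statement that every $(\mathfrak{M},f)\models_{\mathrm{FO}}Rx_1\dots x_k$; the negative clause is handled identically using $\not\in$. These cases are immediate from the definitions. Next, the $\neg\varphi$ case is purely formal: $\mathcal{M}\models^+\neg\varphi$ iff $\mathcal{M}\models^-\varphi$, which by the induction hypothesis for $\varphi$ holds iff $(\mathfrak{M},f)\not\models_{\mathrm{FO}}\varphi$ for all pairs, i.e. iff $(\mathfrak{M},f)\models_{\mathrm{FO}}\neg\varphi$ for all pairs; the $\models^-$ direction is symmetric.

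For the conjunction case, the positive direction is a routine splitting over the two conjuncts: $\mathcal{M}\models^+(\varphi\wedge\psi)$ iff $\mathcal{M}\models^+\varphi$ and $\mathcal{M}\models^+\psi$, which by induction is "all pairs satisfy $\varphi$ and all pairs satisfy $\psi$", i.e. all pairs satisfy $\varphi\wedge\psi$. The negative conjunction clause is the one genuinely delicate point, since it quantifies existentially over a cover $\mathcal{M}'\cup\mathcal{M}''=\mathcal{M}$: I would argue that for the $\Rightarrow$ direction, from such a cover the induction hypothesis gives that every pair in $\mathcal{M}'$ fails $\varphi$ and every pair in $\mathcal{M}''$ fails $\psi$, so every pair in $\mathcal{M}$ fails at least one conjunct, hence fails $\varphi\wedge\psi$ in standard $\mathrm{FO}$. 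For $\Leftarrow$, assuming every pair fails $\varphi\wedge\psi$, I would exhibit the witnessing cover explicitly by setting $\mathcal{M}'=\{(\mathfrak{M},f)\in\mathcal{M}\mid(\mathfrak{M},f)\not\models_{\mathrm{FO}}\varphi\}$ and $\mathcal{M}''=\{(\mathfrak{M},f)\in\mathcal{M}\mid(\mathfrak{M},f)\not\models_{\mathrm{FO}}\psi\}$; every pair failing the conjunction lands in at least one of these, so their union is $\mathcal{M}$, and the induction hypothesis applies to each.

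Finally, the quantifier cases require relating the choice-function and $\top$ operations to pointwise $\mathrm{FO}$ quantification, and this is the step I expect to be the main obstacle in getting the details right. For $\mathcal{M}\models^+\exists x\varphi$, the $\Leftarrow$ direction is the substantive one: if every $(\mathfrak{M},f)$ satisfies $\exists x\varphi$ in $\mathrm{FO}$, then each has a witness $a_{(\mathfrak{M},f)}$, and assembling these witnesses into a single choice function $F$ with $F(\mathfrak{M},f)=a_{(\mathfrak{M},f)}$ yields $\mathcal{M}[F/x]\models^+\varphi$ by the induction hypothesis; the $\Rightarrow$ direction reads off a witness from $F$ for each pair. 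The negative existential clause uses $\mathcal{M}[\top/x]$, which ranges $x$ over \emph{all} domain elements of each model, so $\mathcal{M}\models^-\exists x\varphi$ iff $\mathcal{M}[\top/x]\models^-\varphi$ iff (by induction) every pair $(\mathfrak{M},f[b/x])$ with $b\in\mathit{Dom}(\mathfrak{M})$ fails $\varphi$, which is exactly the $\mathrm{FO}$ condition that every $(\mathfrak{M},f)$ fails $\exists x\varphi$. The only care needed here is bookkeeping about the domains of the assignments $f$ staying uniform after the update, and confirming that the $\top$-expansion in the negative case correctly captures the universal quantification hidden in negated existentials; once that correspondence is pinned down, the induction closes.
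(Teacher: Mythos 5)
Your proof is correct, and it is essentially the intended argument: the paper offers no written proof (it only remarks that the proposition "is easy to prove"), and the canonical route is exactly your simultaneous structural induction on $\models^+$ and $\models^-$ over all model sets, with the witnessing cover $\mathcal{M}'=\{(\mathfrak{M},f)\mid(\mathfrak{M},f)\not\models_{\mathrm{FO}}\varphi\}$, $\mathcal{M}''=\{(\mathfrak{M},f)\mid(\mathfrak{M},f)\not\models_{\mathrm{FO}}\psi\}$ for the negative conjunction clause and the assembled choice function for the positive existential clause. All cases check out, including the domain-uniformity bookkeeping you flag, so nothing is missing.
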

\begin{corollary}\label{cor}
Let $\varphi$ be an $\mathrm{FO}$-formula.
Then
\begin{itemize}
\item
$\{(\mathfrak{M},f)\}\models^+\varphi$ iff
$(\mathfrak{M},f)\models_{\mathrm{FO}}\varphi$,
\item
$\{(\mathfrak{M},f)\}\models^-\varphi$ iff
$(\mathfrak{M},f)\not\models_{\mathrm{FO}}\varphi$.
\end{itemize}
\end{corollary}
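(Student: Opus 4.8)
The plan is to derive the Corollary directly from Proposition~\ref{fok} by specializing to a singleton model set. First I would set $\mathcal{M}=\{(\mathfrak{M},f)\}$ and apply the first clause of Proposition~\ref{fok}, which yields that $\mathcal{M}\models^+\varphi$ holds iff $(\mathfrak{M}',f')\models_{\mathrm{FO}}\varphi$ for every $(\mathfrak{M}',f')\in\mathcal{M}$. Since $\mathcal{M}$ contains exactly the one interpretation $(\mathfrak{M},f)$, this universal quantifier ranges over a single element and therefore reduces to the single condition $(\mathfrak{M},f)\models_{\mathrm{FO}}\varphi$, which is precisely the first item of the Corollary.

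For the second item I would argue identically, using the second clause of Proposition~\ref{fok}: it gives $\mathcal{M}\models^-\varphi$ iff $(\mathfrak{M}',f')\not\models_{\mathrm{FO}}\varphi$ for all $(\mathfrak{M}',f')\in\mathcal{M}$, and again the restriction to the singleton collapses the quantifier to the single requirement $(\mathfrak{M},f)\not\models_{\mathrm{FO}}\varphi$.

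There is essentially no obstacle here, as all the substance is already carried by Proposition~\ref{fok}; the Corollary is merely its restriction to one-element model sets. The only point deserving a moment's care is that $\{(\mathfrak{M},f)\}$ is genuinely nonempty, so the \emph{for all} in the Proposition is not vacuous and does not range over more than one interpretation---it quantifies over exactly one element, which is why both equivalences collapse cleanly to conditions on $(\mathfrak{M},f)$ alone. If one preferred not to invoke the Proposition, the same statement could be obtained by a direct induction on $\varphi$ reproducing the Proposition's argument in the special case of a singleton, but citing Proposition~\ref{fok} is clearly the shorter route.
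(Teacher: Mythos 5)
Your proof is correct and follows exactly the route the paper intends: Corollary~\ref{cor} is stated as an immediate consequence of Proposition~\ref{fok}, obtained by taking $\mathcal{M}=\{(\mathfrak{M},f)\}$ and observing that the universal quantification over a singleton, nonempty model set collapses to a condition on $(\mathfrak{M},f)$ alone. Nothing further is needed.
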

We then extend the above 
defined syntax for first-order logic by a
formula construction rule $\varphi \mapsto Cx\varphi$.
We call the resulting language $L_C^*$.
We let $L_C$ be the fragment of $L_C^*$
where $Cx$ is not allowed in the scope of
negation operators. 
We extend the semantics based on model sets as follows,
where by a constant choice function we mean a
choice function that sends all inputs to the same (existing) element.
\[
\begin{array}{ll}
\mathcal{M}\models^+ Cx \varphi
&\text{ iff }\ \mathcal{M}[F/x]\models^+ \varphi
\text{ for some constant}\\
&$ $\ \ \ \ \ \text{choice function $F$ for }\mathcal{M}
\end{array}
\]
The reading of the operator $Cx$ could be something in the 
lines of there existing a \emph{common $x$}, or perhaps a \emph{shared} or 
\emph{constant} $x$, or even \emph{known} or
\emph{constructible} $x$.
The above suffices for $L_C$.
To define a (possible) semantics for $L_C^*$, we give
the following clause, where $M$ denotes the
common domain of $\mathcal{M}$.
\[
\begin{array}{ll}
\mathcal{M}\models^- Cx \varphi
&\text{ iff }\ \mathcal{M}[M/x]\models^- \varphi
\end{array}
\]
We shall discuss $L_C^*$ somewhat little as it is
somewhat harder to interpret intuitively than $L_C$.
Let us say that two
formulae $\varphi,\varphi'\in L_C^*$ are \emph{existential variants} if $\varphi$
can be obtained from $\varphi'$ by replacing some (possibly none) of the 
quantifiers $\exists x$ by $Cx$
and some (possibly none) of the quantifiers $Cx$ by $\exists x$.
The following is easy to prove (cf. Corollary \ref{cor}).
\begin{proposition}\label{okman}
Let $\varphi$ be an $\mathrm{FO}$-formula
and assume $\varphi'\in L_C^*$ is an existential variant of $\varphi$. Then
\begin{itemize}
\item
$\{(\mathfrak{M},f)\}\models^+\varphi'$ iff
$(\mathfrak{M},f)\models_{\mathrm{FO}}\varphi$,
\item
$\{(\mathfrak{M},f)\}\models^-\varphi'$ iff
$(\mathfrak{M},f)\not\models_{\mathrm{FO}}\varphi$.
\end{itemize}
\end{proposition}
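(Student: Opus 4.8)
The plan is to prove both biconditionals simultaneously by induction on the structure of $\varphi'$, exploiting the fact that the FO-skeleton of $\varphi'$ is exactly $\varphi$: the two formulae have identical parse trees and differ only in whether a given quantifier node carries $\exists$ or $C$. The engine of the argument is the observation that on a \emph{singleton} model set the operators $\exists x$ and $Cx$ are semantically indistinguishable. Indeed, for $\mathcal{M}=\{(\mathfrak{M},f)\}$ a choice function and a constant choice function are literally the same object---each simply selects one element $a\in\mathit{Dom}(\mathfrak{M})$---so the positive clauses for $\exists x$ and $Cx$ agree; and since the common domain of a singleton is all of $\mathit{Dom}(\mathfrak{M})$, we have $\mathcal{M}[\top/x]=\mathcal{M}[M/x]$, so the negative clauses agree as well. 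Granting this, every quantifier of $\varphi'$ may be read as the corresponding quantifier of $\varphi$, and the statement reduces to Corollary \ref{cor}.

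First I would dispatch the base and Boolean cases. For the atoms $Rx_1\dots x_k$ and $x=y$ the positive and negative clauses on a singleton are, by definition, exactly the FO-condition and its negation. The case $\neg\psi'$ is immediate: $\models^+$ and $\models^-$ swap, and the inductive hypothesis for $\psi'$ (whose skeleton is the corresponding FO subformula) closes it. For $(\psi'\wedge\chi')$ the positive clause is a plain conjunction handled by the two inductive hypotheses; the negative clause requires writing the singleton as a union $\mathcal{M}'\cup\mathcal{M}''$, and here one notes that any such cover must place $(\mathfrak{M},f)$ in at least one part, while the empty set satisfies every clause vacuously, so the split reduces to the FO-disjunction $(\mathfrak{M},f)\not\models_{\mathrm{FO}}\psi$ or $(\mathfrak{M},f)\not\models_{\mathrm{FO}}\chi$.

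For the quantifier cases I would first treat the positive clauses, where the singleton-collapse of $\exists x$ and $Cx$ applies directly: applying the selected element keeps us on a singleton $\{(\mathfrak{M},f[a/x])\}$, and the inductive hypothesis yields exactly $(\mathfrak{M},f)\models_{\mathrm{FO}}\exists x\,\psi$. The negative quantifier clauses are more delicate, because $\mathcal{M}[\top/x]$ (equivalently $\mathcal{M}[M/x]$) turns the singleton into the larger \emph{mono-model} set $\{(\mathfrak{M},f[b/x])\mid b\in\mathit{Dom}(\mathfrak{M})\}$, to which the singleton-level hypothesis no longer applies. To absorb this I would strengthen the inductive statement so that it ranges over all sets $\{(\mathfrak{M},g_i)\}_i$ sharing one underlying structure $\mathfrak{M}$, controlling their $\models^-$ behaviour through the FO-skeleton by Proposition \ref{fok}; on such sets $\mathit{Dom}(\mathfrak{M})$ is still the common domain, so $[\top/x]$ and $[M/x]$ continue to coincide and the negative $\exists x\leftrightarrow Cx$ relabelling remains harmless.

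The step I expect to be the main obstacle is precisely the interaction of a \emph{positive} occurrence of $Cx$ with a set that an ancestor negative quantifier has already expanded: on a genuinely multi-element mono-model set a constant choice function is strictly more demanding than an arbitrary one, so there the collapse of $Cx$ to $\exists x$ can fail. The crux of the proof is therefore a polarity-and-parse-tree bookkeeping argument showing that, under the hypotheses at hand, no positive $Cx$ is ever evaluated on such an expanded set, so that the relabelling is never exercised where it would matter. Once this bookkeeping is verified, each negative quantifier clause reduces to the FO-condition ``$(\mathfrak{M},f[b/x])\not\models_{\mathrm{FO}}\psi$ for every $b$'', i.e.\ $(\mathfrak{M},f)\not\models_{\mathrm{FO}}\exists x\,\psi$, and the induction closes.
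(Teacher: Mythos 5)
Your argument is correct and essentially the intended one for the fragment $L_C$, but the step you defer to ``polarity-and-parse-tree bookkeeping'' is a genuine, and in fact unfixable, gap for $L_C^*$. The bookkeeping claim --- that no positive-polarity $Cx$ is ever evaluated on an expanded set --- is false under the stated hypotheses: $L_C^*$ (unlike $L_C$) allows $Cx$ in the scope of negation, so a positive-polarity $Cx$ can sit below a negative-polarity $\exists$, as in the pattern $\neg\exists x\neg Cy\,\psi$. Concretely, take $\varphi'=\neg\exists x\neg Cy(x=y)$, an existential variant of $\varphi=\neg\exists x\neg\exists y(x=y)$, and any $\mathfrak{M}$ with at least two elements. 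Then $\{(\mathfrak{M},f)\}\models^+\varphi'$ unfolds to $\{(\mathfrak{M},f)\}\models^-\exists x\neg Cy(x=y)$, then to $\mathcal{N}\models^-\neg Cy(x=y)$ where $\mathcal{N}=\{(\mathfrak{M},f[b/x])\mid b\in\mathit{Dom}(\mathfrak{M})\}$, then to $\mathcal{N}\models^+ Cy(x=y)$, which demands a single constant $a$ with $b=a$ for \emph{every} $b\in\mathit{Dom}(\mathfrak{M})$ --- false --- while $(\mathfrak{M},f)\models_{\mathrm{FO}}\varphi$, since $\varphi$ is FO-valid. So the first bullet fails; the same computation applied to $\varphi'=\exists x\neg Cy(x=y)$ kills the second bullet. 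Where a positive $Cx$ meets a properly expanded mono-model set, the constant-choice requirement turns $\forall x\exists y$ into $\exists y\forall x$, and no bookkeeping can prevent such meetings in $L_C^*$.

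What this means: the proposition as printed, quantifying over arbitrary $\varphi'\in L_C^*$, is refuted by the example above, so the defect is not in your execution but in the claim your final step was supposed to establish; the paper offers no proof beyond ``easy to prove (cf.\ Corollary \ref{cor}),'' and that assertion is tenable only under the $L_C$ reading. Your strategy does prove the statement for $\varphi'\in L_C$, where $Cx$ may not occur in the scope of negation: for the $\models^+$ bullet, every $Cx$ then has only negation-free material above it, and since model sets can grow or split only at negatively evaluated quantifiers and conjunctions (hence only below a negation), each $Cx$ is evaluated positively on a singleton, where your collapse applies, while every subformula below a negation is $C$-free and is handled by Proposition \ref{fok}; for the $\models^-$ bullet, each $Cx$ is evaluated \emph{negatively} on a mono-model set, where the common domain is the full domain, so $[M/x]=[\top/x]$ makes negative $Cx$ literally negative $\exists x$, and your mono-model strengthening together with Proposition \ref{fok} below negations closes the induction. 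I recommend stating and proving exactly that restricted claim, or more generally the claim for those $\varphi'\in L_C^*$ in which no positive-polarity $Cx$ lies in the scope of a negative-polarity quantifier.
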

It would be interesting and 
relatively easy to extend in a natural way \emph{the first-order
part}\footnote{The part without operators $Cx$.} of the above framework to involve 
generalized quantifiers (following \cite{double}).
Another option would be to 
consider operators that give a Turing-complete 
formalism (following \cite{tc} or even \cite{rub}\footnote{\emph{One} of the
main reasons for defining the 
Turing complete logic $\mathcal{L}$ in \cite{tc} is to enable the study of standard
logic problems in that framework. Indeed, studying complexities of finite 
satisfiability and finite validity problems makes a lot of 
sense in the framework of $\mathcal{L}$, while first-order logic FO is not the
right framework for related studies; studying fragments of $\mathcal{L}$ 
makes more sense than studying only fragments of FO. First-order logic is
too weak to capture standard computational logics with recursion mechanisms, while $\mathcal{L}$ contains
such logics as direct fragments almost. Also, doing descriptive 
complexity in the framework of $\mathcal{L}$ makes sense, as $\mathcal{L}$
captures RE and thus all standard complexity classes correspond to fragments of
the umbrella logic $\mathcal{L}$. Using an umbrella logic enables one to directly identify 
how logical constructors lead to increases in expressivity. Also $\mathcal{L}_{RE}$ of \cite{rub}
can be used as a basis for different kinds of studies in a similar way as $\mathcal{L}$ and
due to analogous reasons.

Capturing complexity classes with $\mathcal{L}$ and $\mathcal{L}_{RE}$ is
quite easy. One can for example modify the semantics of $\mathcal{L}$ by putting
limits to the number of times a loop can be entered. 
For example, one can dictate that each loop can 
be used only a polynomial (in the model domain 
size) number of times, or exponential, or whatever. This is involves using
clocked loops in the same way as in 
\cite{mucalc}, but with more adjusted limits.
 One can also limit the amount by how many points domains can be expanded in
both logics $\mathcal{L}$ and $\mathcal{L}_{RE}$. Capturing, e.g., ExpSpace and PSpace is
rather easy. And even many of the higher
classes are also easy to capture nicely due to the domain expansion capacity.

Lack of recursion is not the only weakness of FO. It is quite striking (and says something
about the state of logic as a field) that none of the paradigmantic computational logics has, e.g.,
majority quantifiers `for most $x$' in them. This is
striking because \emph{similarity} relations are \emph{central} for
real-life reasoning, and most similarity relations are defined in terms of majority statements and
the like. (Probabilistic logics are of course relatively widely studied, however, so things are not all that bad.)
The logic $\mathcal{L}$ can simulate typical majority statements and whatnot, so $\mathcal{L}$
banishes most related problems. The expressivity of $\mathcal{L}$ is, indeed, of a
\emph{fundamental} nature. The
same holds not for FO, despite Lindstr\"{o}m's theorems. Of course $\mathcal{L}$ is
fundamental mainly in relation to finite models, but still.

Classifying fragments of FO is, nevertheless, interesting and reasonably relevant. A proper 
classification scheme for fragments can be easily based on an algebraic approach. Indeed, 
cylindric algebras and Codd's theorem  offer obvious clues how we get access to fragments in a
proper way. While prefix classes of FO offer a nice starting point for classifications, it is a too crude
approach. Even the Guarded Fragment and $\mathrm{FO}^2$ fall outside prefix classes, not to 
mention logics with more intricate limitations, such as limited uses of $\neg$ and whatnot. A lot 
remains to be said about this issue, but this is getting too irrelevant to the main text.
Wittgenstein's Tractatus is nice, but similarly, probably some eighty percent of the text is irrelevant to
the main argument there.
}),
possibly following a direct game-theoretic approach
rather than the team semantics flavoured one given above. This would lead to
formalisms for parallelism and distributed computation
when used with model sets as opposed to models. 
However, while these generalizations can be done such
that the resulting formalisms are easily seen natural,
the formalism here that uses $Cx$ is 
harder to interpret especially if we allow for $Cx$ in 
the scope of negations.
If we use the semantics in formulae with $Cx$ occurrences,
then disjunction together with $Cx$ can become peculiar.\footnote{Note
that $\varphi\vee\psi$ simply means $\neg(\neg\varphi\wedge\neg\psi)$ here.}
Indeed, consided the model set $\mathcal{M}$ with two disjoint
models and nothing else. Now $\mathcal{M}$ satisfies\footnote{See \cite{det} for similar considerations.} $Cx(x=x)\vee C(x=x)$
while not satisfying $Cx(x=x)$. Thus the reading of $\vee$ indeed is
should be ``\emph{there are two cases such that $\varphi$ and $\psi$}" or
even ``\emph{the possibilities split into two cases
such that in the first case $\varphi$ and in the second case $\psi$}."
(Note that dependence logic
requires a similar reading of $\vee$ to be natural.)
Obviously, implications (e.g., $P(x)\rightarrow \varphi$
which stands for $\neg P(x)\vee \varphi$) can of course be
read ``\emph{if $P(x)$, then $\varphi$}" but
also ``\emph{in the \emph{case} $P(x)$, we have $\varphi$}."

These are natural readings especially if one is attempting to unify
semantics and proofs,
thereby relating $\vee$ with the proof by cases protocol.
Adopting the perspective that a model set is (intuitively) a
single fixed but unknown object (for example any
group from a collection of groups that extend a particular single group\footnote{Groups 
have a relational representation here since we
are considering relational signatures.}) is 
very natural and in such a framework it is natural to
make statements about splitting into cases.  ("The 
(unknown) group $G$ has property $P$
or $G$ has property $Q$...")\footnote{Similar 
statements are omnipresent. Further operators arise for related
statements, such as "It is possible that $G$ has property $P$," etcetera.
This modality statement obviously seems to say (more or less) that
the subset (of the current model set) where $G$
has $P$ is nonempty.} This is true 
especially because proofs are often (or 
almost always) made for a \emph{fixed
but unknown object} or objects. Thus the above semantics works for 
formalising that kind of thinking. Category theory of course can also be thought to operate
this way but here we have a very simple logic that can
also directly speak about the internal structure of objects.
It is obviously easy to expand the above framework, but we 
shall leave that for later.
\section{Satisfiability and applications}
We say that a sentence $\varphi\in L_C$ is \emph{satisfiable} if
there is some nonempty model set $\mathcal{M}\models^+\varphi$.
The satisfiability problem for a fragment $F$ of $L_C$ takes a sentence of $F$ as an input and asks
whether some nonempty model set satisfies $\varphi$, i.e., whether $\mathcal{M}\models^+\varphi$
for some nonemtpy model set $\mathcal{M}$.

The two-variable fragment of $L_C$ is the set of formulae 
that use instances of only the two variables $x$ and $y$.
We next show a complexity result concerning the two-variable fragment of $L_C$,
although it is easy to see that the related argument rather flexibly 
generalizes to suitable other fragments as well.
We discuss two-variable logic for convenience and also as it
and its variants (even in the team semantics context) have
received a lot of attention in recent years, see, e.g.,
\cite{worrell, boja, onedimensional, onedimsecond, infcomp, double, zeume}.
\begin{proposition}
The satisfiability problem of the two-variable
fragment of $L_C$ is $\mathrm{NEXPTIME}$-complete.
\end{proposition}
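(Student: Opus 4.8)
The plan is to show that, although $L_C$ is semantically richer than $\mathrm{FO}$ on general model sets, its \emph{satisfiability} problem coincides with that of ordinary $\mathrm{FO}^2$, so that both the upper and the lower bound reduce to the classical fact (due to Gr\"{a}del, Kolaitis and Vardi) that satisfiability of two-variable first-order logic is $\mathrm{NEXPTIME}$-complete.

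The crucial ingredient is a downward-closure (locality) lemma: if $\varphi\in L_C$ and $\mathcal{M}\models^+\varphi$, then $\mathcal{N}\models^+\varphi$ for every nonempty $\mathcal{N}\subseteq\mathcal{M}$. I would prove this by induction on $\varphi$. The atomic, conjunction and (pure $\mathrm{FO}$) negation cases are immediate from Proposition \ref{fok}, since there ``$\models^+$'' means ``every member satisfies the formula'', a property trivially inherited by subsets. For $\exists x$ one restricts a witnessing choice function to $\mathcal{N}$ and applies the induction hypothesis. The only delicate case is $Cx\varphi$: a witness is a constant choice function picking a single element $a$ in the \emph{common domain} of $\mathcal{M}$; passing to a subset $\mathcal{N}$ can only \emph{enlarge} the common domain (it is an intersection over fewer models), so $a$ remains available, and $\mathcal{N}[a/x]\subseteq\mathcal{M}[a/x]$ lets the induction hypothesis fire. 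It is precisely the syntactic restriction defining $L_C$ — that $Cx$ never occurs inside a negation — that keeps disjunction, and hence the case-splitting behaviour witnessed by the $L_C^*$-formula $Cx(x=x)\vee Cx(x=x)$, out of the positive skeleton; this is exactly what makes downward closure hold.

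From downward closure it follows that a sentence $\varphi\in L_C$ is satisfiable iff it is satisfied by some singleton model set $\{(\mathfrak{M},f)\}$. On singletons every choice function is (equivalent to) a constant one, so by Proposition \ref{okman}, applied with the $\mathrm{FO}$-formula $\varphi^\ast$ obtained from $\varphi$ by replacing each $Cx$ by $\exists x$, we get $\{(\mathfrak{M},f)\}\models^+\varphi$ iff $(\mathfrak{M},f)\models_{\mathrm{FO}}\varphi^\ast$. Hence $\varphi$ is $L_C$-satisfiable iff the ordinary two-variable sentence $\varphi^\ast$ is satisfiable, and $\varphi\mapsto\varphi^\ast$ is a trivial linear-time syntactic map preserving the number of variables. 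This places the two-variable fragment of $L_C$-satisfiability in $\mathrm{NEXPTIME}$.

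For the matching lower bound I would observe that every two-variable $\mathrm{FO}$ sentence $\psi$ is already an $L_C$ sentence (it contains no $Cx$), and for such $\psi$ the same argument — or directly Proposition \ref{fok} with Corollary \ref{cor} — gives that $\psi$ is $L_C$-satisfiable iff it is satisfiable in the classical sense. The classical $\mathrm{NEXPTIME}$-hardness of $\mathrm{FO}^2$-satisfiability therefore transfers verbatim. I expect the main obstacle to be the clean statement and verification of the downward-closure lemma, and in particular the common-domain bookkeeping in the $Cx$ case; once that is in place, the reduction to $\mathrm{FO}^2$ and hence both complexity bounds are routine.
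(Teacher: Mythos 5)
Your proof is correct, but it takes a genuinely different route from the paper's. The paper argues by a direct translation $T$ from $L_C$ into $\mathrm{FO}^2$ that introduces a \emph{fresh unary predicate} $D$ representing the common domain: $Cx\varphi$ is mapped to $\exists x(Dx\wedge T(\varphi))$ (still two variables, since $Dx$ reuses $x$), and equisatisfiability is proved by showing, outside-in, that $\mathcal{M}\models^+\varphi$ implies $\mathcal{M}_D\models^+T(\varphi)$, where $\mathcal{M}_D$ expands each model of $\mathcal{M}$ by interpreting $D$ as the common domain of $\mathcal{M}$; the converse direction uses Corollary \ref{cor} on a singleton model set, much as you do. You replace the translation-plus-$D$ machinery with a structural lemma: $\models^+$ for $L_C$ is preserved under passage to nonempty subsets of a model set, so satisfiability can be tested on singletons, where Proposition \ref{okman} makes $Cx$ and $\exists x$ interchangeable and no auxiliary predicate is needed. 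Your downward-closure induction is sound: negation in $L_C$ applies only to pure $\mathrm{FO}$ subformulas, which Proposition \ref{fok} handles; choice functions restrict to subsets; and common domains can only grow when the model set shrinks, so constant witnesses survive. That lemma is of independent interest (it is the analogue of downward closure of teams in dependence logic), and your reduction has the cosmetic advantage of producing an equisatisfiable $\mathrm{FO}^2$ sentence over the \emph{same} signature. What the paper's translation buys in exchange is a statement at the level of arbitrary model sets --- the claim $\mathcal{M}\models^+\varphi\Rightarrow\mathcal{M}_D\models^+T(\varphi)$ never shrinks $\mathcal{M}$ --- which is the form more likely to generalize to fragments where downward closure fails, for instance ones allowing $Cx$ within negations. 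Both proofs then appeal identically to the $\mathrm{NEXPTIME}$-completeness of two-variable first-order logic for the upper bound, and to the fact that every $\mathrm{FO}^2$ sentence is already an $L_C$ sentence for the lower bound.
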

\begin{proof}
Define the following translation $T$ from $L_C$ into $\mathrm{FO}$,
where  $D$ is a fresh unary relation symbol (intuitively representing
the common domain of a model set).
\[
\begin{array}{ll}
T(Rx_1...x_k) &=\ \ Rx_1...x_k\\
T(x=y) &=\ \ x=y\\
T(\neg\varphi) &=\ \ \neg T(\varphi)\\
T((\varphi\wedge\psi)) &=\ \ (T(\varphi)\wedge T(\psi))\\
T(\exists x \varphi) &=\ \ \exists x T(\varphi)\\
T(C x \varphi) &=\ \ \exists x (D x\wedge T(\varphi))\\
\end{array}
\]
We will prove below (in a couple of steps) that a
formula $\varphi$ of $L_C$ is satisfied by
some nonempty model set iff $T(\varphi)$ is satisfied by some model (in
the classical sense).
This will conclude the proof of the current proposition as it is
well known that the satisfiability problem of two-variable 
first order logic is $\mathrm{NEXPTIME}$-complete.
We first note that if $T(\varphi)$ is satisfiable by some
model $(\mathfrak{M},f)$, then we have $\{(\mathfrak{M},f)\}\models^+T(\varphi)$ by
Corollary \ref{cor}. From here it is very easy to show that $\varphi$ is
satisfiable by the same model set by evaluating formulae step by step from outside in
(and recalling the syntactic restrictions of $L_C$.)
Thus it now suffices to show
that if some nonempty model set satisfies $\varphi$,
then $T(\varphi)$ is satisfied by some model.
To prove this, we begin by
making the following auxiliary definition.
Let $\mathcal{M}$ be a model set
and let $\mathcal{M}_D$ denote the model set 
obtained from $\mathcal{M}$ by adding a unary
predicate $D$ to each model that covers exactly the common domain of $\mathcal{M}$.
Recall that we have already fixed $\varphi$.
%
%
%
%
%
%
%
%
%
%

\medskip

\noindent
\textbf{Claim.} $\mathcal{M}\models^+\varphi$ implies $\mathcal{M}_D\models^+T(\varphi)$.

\medskip

\noindent
The claim is easy to prove by evaluating formulae from
outside in using the semantics for model sets.
Assume that $\mathcal{M}\models^+\varphi$ for some nonempty
model set $\mathcal{M}$. Thus $\mathcal{M}_D\models^+ T(\varphi)$ by
the claim. Thus $(\mathfrak{M},f)\models_{
\mathrm{FO}} T(\varphi)$ for all $(\mathfrak{M},f)\in\mathcal{M}_D$ by Proposition \ref{fok}.
Therefore (since $\mathfrak{M}_D$ is nonempty) we have $(\mathfrak{M},f)
\models_{\mathrm{FO}} T(\varphi)$ for some $(\mathfrak{M},f)\in\mathcal{M}_D$.
This concludes the very easy proof.
\end{proof}

Going from perfect to imperfect information is in
general extremely easy to justify in several 
ways, so let us look at more concrete and even
rather specific and particular possible applications of model sets.

Ontology-based data access and related querying 
frameworks obviously offer a natural application for
model sets. The work there is rather active, see, e.g., \cite{obda, icdt17}
and the references therein. Another obvious and quite 
different application is distributed computing. 
One (of many) ways to model a computer network via logic would be to
combine the approaches of \cite{kucsl13} (which
accounts for communication) with \cite{tc} (which 
accounts for the local (Turing-complete) computation).
The nodes of \cite{kucsl13} would become first-order models, so
the domains considered would be model sets (with relations that
connect models to other models). See also \cite{kuu14gan} for
some (simple) adaptations of the framework in \cite{kucsl13}.

For yet another example, let $\varphi$ and $\psi$ denote your favourite
theorems. One can now ask:
``Does $\psi$ \emph{follow from} $\varphi$?''
The first answer could be: ``Yes, since $\varphi$ is a
true theorem, it in fact already follows from an empty set of
assumptions.'' The next answer could be a bit more 
interesting: for example, if $\varphi$ and $\psi$ were
theorems of arithmetic, one could try to
investigate if $\psi$ follows from $\varphi$ as a logical
consequence, i.e., even without the axioms of arithmetic.
Different approaches to \emph{relevance} have been
widely studied, and the example below is not unrelated to that. 
Let $D$ be a deduction system (or some conceptually similar algorithm). Now, for each $n\in\mathbb{N}$, let $\rightarrow_n^D$ denote the connective defined such that $\varphi\rightarrow_n^D\psi$ holds if $\psi$ can be deduced from the premiss $\varphi$ in $n$ deduction steps (applications of deduction rules) in $D$. Here `$\varphi\rightarrow_n^D\psi$ holds' is a metalogical statement; we could consider closing the underlying logic under $\rightarrow_n^D$ and the other connectives, but we shall not do that now. We note that also statements $(\varphi_1,...,\varphi_k)\rightarrow_n^D\psi$, containing several premises, can be introduced. 
Statements $\varphi\rightarrow_n^D\psi$ capture aspects of relevance. The idea here is that whether $\psi$ follows
from $\varphi$, depends on the particular background
knowledge and abilities as well as the computational capacity (of an agent, for example). With this interpretation, it is indeed highly \emph{contingent} whether something follows in $n$ steps from something else. It here depends on the particularities of $D$. Also, how immediately something follows from something else, is a matter of degree. This is the role of the subindex $n$.

This kind of a framework is one example (of many) that can be elaborated in a possibly more interesting way by using an approach to proofs that is directly (indeed, directly) linked to semantics, with connectives corresponding to proof steps. Model sets offer such possibilities in a natural way. It is worth noting that also refutation calculi (rather than proof calculi), and generalizations thereof, fit into the framework well. The related approaches can be based on the dual
systems of \cite{double}. That framework obviously offers quite natural possibilities for generalizations of model sets as well.

Another natural and related approach is to consider extensions of ATL\footnote{ATL stands for alternating time temporal logic.} with individual states replaced by relational structures. The players then modify the relational structure in every step, leading essentially to a computation tree with nodes corresponding to relational structures.\footnote{It can be quite natural to let strategies be determined by the relational structure only, which means that positional strategies are used. Trivially, incomplete information can be modeled---with some success---with model sets (instead of individual structures) as states. (Of course these can simply be considered \emph{sets} of states, if desired.)}
This is a very general approach. In the particular case of modeling proofs or evolving information states, the relational structures can simply be (or encode) sets of formulae for example. Note that even paraconsistent states are quite easy to handle here, as they are simply inconsistent sets of formulae.
More on this in section \ref{generalsystems} below.
It is especially interesting to consider systems where the individual players take actions,
and those actions plus the action of nature then
computably (or in a semi-decidable way) produce the new relational model.
The moves are determined by the previous relational structure and
the actions. Even distributed computing
systems and beyond are natural in the framework. Note that even infinite structures make
immediate sense here if there exists a \emph{perception function} that returns a 
finite structure from each infinite structure, and that finite structure is then used by at 
least the agents' strategies. Note that different agents' strategies can even 
depend on different (but probably overlapping) relation symbol sets. In any case, it is
natural to make the strategies to depend solely on the current relational structure; any memory
ought to be encoded in that structure (and can be visible to only a single agent since 
the agents can see different relation symbol sets). This is a nice way to model the interaction of
minds together with the material world.
Multiperspective thought provides another
example of immediate applications of model sets. Such thought 
seems to be considered controversial by 
many. Yet, it is mostly very simple, and it is indeed 
surprizing that it is so often 
considered problematic.
%
The difficulties in understanding related 
statements are often due to the assumption of
bivalence and the assumption that 
concepts have fully fixed meanings in
contexts where such assumptions are na\"{i}ve.\footnote{Indeed,
many---if not most---philosophical problems stem from
underdeterminacy of concepts. Here `underdetermined' could mean
`not fully defined' and determined  `fully defined.' Alternatively,
`underdetermined' could here stand for 
`not specified up to a sufficient extent' and determined
for `specified up to a sufficient extent.' Philosophical confusion quite typically arises from considering 
underdetermined concepts determined.
A related and highly relevant demarcation 
problem is to \emph{try} to determine which questions can be
naturally turned into `determined' questions and which not. Here a prima facie idea would be that in
favourable cases, a question turns into a set of determined questions, one
for each natural interpretation, with each
determined question being associated with definitions that force determinacy.
The issue is then to consider how natural and 
appropriate those differing sets of definitions are (and also to
solve the---unambiguous but open---determined questions). In less favourable
cases, the question simply escapes all attempts to banish ambiguities, due to intrinsic 
ambiguities and
finite resources for the classification process. A sometimes sufficient `metasolution'
here could perhaps be that no solution can be obtained.}
Let us consider a very simple formal framework that captures---and
thus elucidates---at 
least some aspects of multiperspective thought.

%
Consider a model set $\mathcal{U}$ which we shall call the \emph{universe}.
A \emph{property} is a subset $\mathcal{P}\subseteq\mathcal{U}$. (Properties
need not be closed under isomorphism.) A \emph{weight function} is a
mapping $w:\mathit{Pow}(\mathcal{U})\rightarrow S$, where $S$ is some
non-empty set and $\mathit{Pow}$
the power set operator; the set $S$ could be, for example, the set of
real numbers $\mathbb{R}$. We call the set $S$ the \emph{set of weights}
and the elements $s\in S$ are obviously called \emph{weights}.
Let $S_m$ denote the set of multisets over $S$, i.e., collections of
elements of $S$ that also enable different multiplicities of elements to occur.\footnote{We
allow for infinite multipilicities, but limit the largest possible multiplicity with
some sufficiently large cardinal, for example something greater than 
the power set of $\mathcal{U}$.}
A function $E:S_m\rightarrow V$ is called an \emph{evaluation function},
where the set $V$ is an arbitrary set of \emph{values} $v\in V$. For example,
for finite $\mathcal{U}$ and with $S = \mathbb{R}$, the
function $E$ could be the operator that gives the
sum of any collection of inputs.
Now, let $s$ be a one-to-one function
from $\mathit{Pow}(\mathcal{U})$ into a set of \emph{statements}, so
each property $\mathcal{P}$ is associated with a statement $s(\mathcal{P})$.
The weight of the statement $s(\mathcal{P})$ is $w(\mathcal{P})$
and the value of a set $K$ of statements is $E(\{ w(\mathcal{P})\ |\ s(\mathcal{P})\in K\})$,
where the argument set is a multiset of weights of statements.
So, we (or a group of people) can know that different properties hold, i.e., we know the 
actual model is inside different sets $\mathcal{P}\subseteq\mathcal{U}$. Each 
property $\mathcal{P}$ contributes a weight. We can possibly combine the 
weights and get different values, depending on which properties
are involved. For example, some true 
properties can contribute a negative number (as a weight) and others a
positive one. The \emph{full value} is the value obtained by
considering the multiset of weights of all properties. (For example, the
full value could be the sum of all weights.)
Now, the full value then, in the end, can be
associated with a truth value, if desired. Note that it often natural to take 
the intersection of all known properties first and then associate that one 
property with a weight.

For example, I can
state that John is rich as he has a million dollars on his
bank account, and Jill can state that John is not rich as he has a
debt of two million dollars. Obviously these observations of 
partial knowledge (i.e., John has money and debt) are simply contributions towards an ultimate 
picture, and the possible seeming syntactic contrariness of the related partial statements
(John is rich and John is not rich) amounts to
nothing much at all.
The next section shortly discusses a fresh link between logic and combinatorics.
The connection to model sets is kind of trivial but nice. Every combinatorial
property $P$ and input structure domain size $n$ will give rise to a model set $P_n$ which
contains the models with property  $P$ and the domain size $n$. The domain of 
the models can naturally be considered to be $\{0,\dots , n-1\}$. The size of $P_n$ is
related to the counting enumeration function for $P_n$.

\subsection{The interplay of logic and combinatorics}

What is enumerative combinatorics? This is, of course, a 
philosophical question. The business of enumerative combinatorics
often follows the following pattern:

\begin{enumerate}
\item
Input: a property.
\item
Output: the enumeration function for the property.
\end{enumerate}

\noindent
Perhaps a more accurate picture would be the following.

\begin{enumerate}
\item
Input: a property.
\item
Output: a formula for the enumeration function for the property.
\end{enumerate}

For example, we could be asked how many symmetric binary
relations there are on an $n$-element set.
There are $2^{\binom{n}{2} + n}$ symmetric binary relations on an $n$-element
set, so here the input would be the property of being a symmetric binary relation, 
and the output would be the formula $2^{\binom{n}{2} + n}$. Another example
could be the following.

\begin{enumerate}
\item
Input: being an anti-involutive\footnote{Here we define a 
function to be anti-involutive if $f(f(x))\not=x$ for all inputs $x$.} function $f:n\rightarrow n$.
\item
Output: a formula for the enumeration function for the property.
\end{enumerate}

\noindent
The enumeration function is given by
$${\sum\limits_{i = 0}^{i = \lfloor n/2\rfloor}(-1)^i(n-1)^{n-2i}
\binom{n}{2i}\frac{(2i)!}{2^i\cdot i!}}$$
which follows, for example, as a special case of Proposition 3.1 of \cite{kuusilutz}.
Now, an interesting and relevant idea would be to formalize both the input and 
output in our general scheme for enumerative combinatorics.
There is a lot of freedom in the way this can be done. For example, the input 
properties could be formalized in first-order logic or even (fragments of) some stronger logic
such as the Turing-complete logic from \cite{tc}.\footnote{Also the Turing-complete
logic $\mathcal{L}_{\mathrm{RE}}$ from \cite{rub} would be interesting here.} On the
output side, also many
different formalisms could be interesting. Arithmetic formulae could be
constructed from $+,\cdot,\sum,\prod$ etcetera, starting from a variable $n$ (or even 
several variables). Different kinds of results would be obtained for different sets of 
functions and operators. Subtraction, division, exponentiation, factorials, and 
operators for all kinds of basic operations\footnote{In addition to $\sum,\prod$, a whole
range of operations comes to mind, even ones for constructing recurrence relations etcetera.
The sky is the limit.}
could be used in various different combinations. Our above example would become
formulated as follows.

\begin{enumerate}
\item
Input: ${\forall x \forall y
\neg(Rxy\wedge Ryx) \wedge\forall x\exists^{=1}y\, Rxy}$.
\item
Output: ${\sum\limits_{i = 0}^{i = \lfloor n/2\rfloor}(-1)^i(n-1)^{n-2i}
\binom{n}{2i}\frac{(2i)!}{2^i\, \cdot\, i!}}$.
\end{enumerate}

There exists no theory based on this exact idea, so it must be built. There is indeed a
lot of freedom here. 
The idea of the suggested research programme is to
understand the interplay of logical operators and 
arithmetic functions (and operators). How do arithmetic expressions arise from the 
logic-based specifications combinatorial properties, and vice versa? What can we learn 
from those connections?

However, something is known of course. For example, already \cite{kuusilutz} gives 
the answer to the above input ${\forall x \forall y
\neg(Rxy\wedge Ryx) \wedge\forall x\exists^{=1}y\, Rxy}$ in an algorithmic way.
Indeed, from \cite{kuusilutz}, an algorithm can be extracted that takes as inputs formulae of 
two-variable logic with a functionality axiom and outputs formulae for the related
enumeration functions. If we care only about the complexity of the enumeration
function instead of arithmetic formulae describing it, then of course results in 
model counting are immediately interesting as well. For example, all properties expressible in 
two-variable logic have PTIME-computable 
enumeration functions even when a 
functionality axiom is used \cite{kuusilutz}.\footnote{The input here is
given in unary, conceptually being associated
with a structure domain rather than simply domain size.}

\subsection{Some general notions of a system}\label{generalsystems}

In this section we define some general notions of an \emph{evolving system}. No 
restrictions based on computability will be imposed at first. We begin
with formal definitions and give concrete examples after that.

Let $\sigma$ denote a (possibly infinite) vocabulary.
Let $A$ be an arbitrary set and $I$ an ordered set. Intuitively, $A$ is a set of
\emph{actions} and $I$ a set of indices or agent names.
%
%
%
%

%
%
%
\begin{definition}\label{firstsystem}
\normalfont
A \emph{system frame base} (or simply a base) over $(\sigma,A,I)$ is a structure $(\mathcal{S},F)$ 
defined as follows.
\begin{enumerate}
\item
$\mathcal{S}$ is a set of $\sigma$-models.
The set $\mathcal{S}$ and any of the models can be infinite.\footnote{If desired, the
set $\mathcal{S}$ can even be a set of 
pairs $(\mathfrak{M},f)$, where $\mathfrak{M}$ is a $\sigma$-model and $f$ an
assignment mapping variables to the domain of $\mathfrak{M}$.}
%
%
%
%
\item
$F$ is a function $F:\ \mathcal{S}\times A^I\ \ \rightarrow\ \ \ 
\mathcal{P}(\mathcal{S})$. We require that $F$ returns a non-empty set. (This is 
neither crucial nor elegant but is \emph{here} done to simplify things. An empty output can 
be modelled by special features on output models.)
\end{enumerate}
A \emph{system frame} over $(\sigma,A,I)$ is structure $(\mathcal{S},F,G)$ 
defined as follows.
\begin{enumerate}
\item
$(\mathcal{S},F)$ is a system frame base as defined above.
\item
Let $T$ be the set of sequences $\bigl((\mathfrak{M}_i,\mathbf{a}_i)\bigr)_{0\leq i\leq k}$
such that the following conditions hold.
\begin{enumerate}
\item
$k\in\mathbb{N}$.
\item
$\mathfrak{M}_i\in \mathcal{S}$ and $\mathbf{a}_i\in A^I$ for each $i$.
\item
$\mathfrak{M}_{i+1}\in F(\mathfrak{M}_i,\mathbf{a}_i )$ for all $i\in\{0,\dots , k-1\}$.
\end{enumerate}
The set $T$ is called the set of \emph{finite proper evolutions of $(\mathcal{S},F)$.}
We define $G$ to be a function $G:T\ \ \rightarrow\ \ \mathcal{S}$ such that $G(t)
\in F(\mathfrak{M}_k,\mathbf{a}_k)$ for all $t\in T$, where $(\mathfrak{M}_k,\mathbf{a}_k)$ is 
the last member of the sequence $t\in T$.
\end{enumerate}
A \emph{system} over $(\sigma,A,I)$ is a structure $(\mathcal{S},F,G,(f_i)_{i\in I})$ 
defined as follows.
\begin{enumerate}
\item
$(\mathcal{S},F,G)$ is a system frame as defined above.
\item
Each $f_i$ is a function $f_i:\ \mathcal{S}
\ \ \rightarrow\ \ \ A$.
\end{enumerate}
The set $\mathcal{S}$ is the \emph{domain} of $(\mathcal{S},F)$, $(\mathcal{S},F,G)$
and $(\mathcal{S},F,G,(f_i)_{i\in I})$. \ \ \ \qedsymbol
\end{definition}
%

%

We often talk about \emph{frame bases} (or \emph{bases}) and \emph{frames} instead of
\emph{system frame bases} and \emph{system frames}.

Intuitively, the frame base of a system can be considered the 
\emph{material} or \emph{physical} part of the system, while $G$ and the functions $f_i$ are 
the \emph{non-physical} or \emph{non-material} part.
The functions $f_i$ can be associated with individual \emph{agents}\footnote{More 
accurately, the functions $f_i$ are agent behaviour strategies and indices in $I$
correspond to agents or agent place holders.},
while $G$ can be considered a higher force that
ultimately determines the final evolutive
behaviour of the system.\footnote{$G$ can be interpreted in 
many ways. It could simply be considered \emph{change} or 
\emph{luck}, to give one example. One of the main features of $G$ is that it removes
non-determinism from systems.} The agents choose
actions from $A$ based on the current model, and a
new model is then produced according to the function $G$ based on the chosen actions.
Let $M = (\mathcal{S},F,G,(f_i)_{i\in I})$ be a system over $(\sigma,A,I)$ with domain $\mathcal{S}$.
A pair $(M,\mathfrak{M})$, where $\mathfrak{M}\in\mathcal{S}$, is called an \emph{instance}.
We may also call $(M,\mathfrak{M})$ a \emph{pointed system}, in analogy with modal logic.
The set of finite \emph{evolutions} of the frame base $(\mathcal{S},F)$ is the set that
containins all finite proper evolutions of $(\mathcal{S},F)$ and 
all models $\mathfrak{M}\in\mathcal{S}$. The models can be thought of as
zero-step evolutions of $(\mathcal{S},F)$.
The set of finite evolutions of the  system $M$ is the 
set that contains all $\mathfrak{M}\in\mathcal{S}$ and all pairs $(t,\mathfrak{M}_{k+1})$
such that the following conditions hold.
\begin{enumerate}
\item
$t = (\mathfrak{M}_i,\mathbf{a}_i)_{i\leq k}$ for some $k\geq 0$, all 
models and action tuples being from $M$.
%
%
%
%
\item
$\mathbf{a}_i = (f_i(\mathfrak{M}_i))_{i\in I}$ for each $i\leq k$.
\item
$\mathfrak{M}_{i+1} = G\bigl((\mathfrak{M}_j,\mathbf{a}_j)_{j\leq i}\bigr)$
for all $i\leq k$.
\end{enumerate}

An infinite evolution is defined similarly, in the obvious way, but
without the final model $\mathfrak{M}_{k+1}$. Note that the agents' choices are
determined by the current model rather than the 
sequence including also all the previous models and actions leading to
the current model. The interpretation of this is that histories are to be
encoded in the current model, i.e., the current material world. It is also
natural to consider memory, somehow encoded, rather than some full history.

While this framework is an intuitive way of thinking about systems,
there are other natural choices. We
shall look at an alternative way of defining systems such that
the non-physical part can (ultimately) be eliminated, more or less, from the picture.
This new way is, for most purposes, \emph{essentially} equivalent to the old definition above.

Consider a system frame $S := (\mathcal{S},F,G)$. We describe a
way to eliminate the function $G$.
For each $E = \mathfrak{M}_0,\mathbf{a}_0,\dots ,\mathfrak{M}_{k+1}$ that is a 
finite evolution of the frame base $(\mathcal{S},F)$,
we create a new model $\mathfrak{M}_E$ which has $\mathfrak{M}_{k+1}$ as a \emph{basic part}
and some encoding of the evolution sequence $E$ as a disjoint part, with elements labelled by 
some fresh predicate in order to be able to tell which part of the 
new model encodes $E$; also other fresh relation symbols can of course be used, for
example to encode the order in which the models occur, and the actions that lead from 
one model to another, etcetera. We then create a new
domain $\mathcal{S}'$ that includes $\mathcal{S}$ and all the new models, that is,
the new domain includes (encodings of) all finite evolution 
sequences $E$ of $(\mathcal{S},F)$.
The zero-step evolutions can be thought of as \emph{starting point models} in 
the new system frame base we are about to create. We define a new index set $I'$
by adding a new index $J$ to the beginning of $I$ in order to be able to
accommodate a new function $g_J$ that simulates functions $G$.\footnote{Formally, $I'$ is an
ordered set that begins with $J$ after which come the elements of $I$.}
We modify $F$ to a new 
function $F': I'\rightarrow A'$, where $A'$ extends $A$, in the way described next.
We \emph{first} define a new 
function $F_G'$ that informally speaking always outputs the set of models (now extended with the 
history) that the function $G$ would also have given. More formally,
let $\mathfrak{M}$ be a model in the new domain, and let $\mathbf{a}\in {A}^{I}$.
Let $E = ((\mathfrak{M}_i,\mathbf{a}_i)_{i\leq k},\mathfrak{N})$ denote 
the evolution (containing models from the old domain) that is
encoded in the structure of $\mathfrak{M}$.
Thus $\mathfrak{N}$ is the model in the old
domain that $\mathfrak{M}$ corresponds to, i.e., $\mathfrak{N}$ is the current model 
that $\mathfrak{M}$ encodes. We define $F_G'(\mathfrak{M},\mathbf{a})$ to be the
model whose current part is $\mathfrak{M}' = G(E\cdot (\mathfrak{N},\mathbf{a}))$ and which 
encodes the evolution $E\cdot \mathbf{a}\cdot \mathfrak{M}'$.
Notice that $F_G'$ is now deterministic. The frame base $(\mathcal{S}',F_G')$ 
simulates $(\mathcal{S},F,G)$ in the obvious way.
Note that $F_G'$ covers only one particular function $G$.
We can modify the framework by defining a deterministic function $F'$ which
takes into account different possible functions $G$. Indeed, we can define $F'$ so that the
novel index $J$ can accommodate different functions\footnote{Note that these
functions are simply strategies of an
agent.} $g_{J}$ that simulate 
functions $G$. We include in $A'\supseteq A$ a single choice for each possible
behaviour pattern that give an outcome model based on the 
moves of the old agents $f_i$ with $i\in I$. Then 
the function $F'$ can easily be modified to deterministically provide the
outcome that $G$ would force. We can make $A'$ small is 
all features in our framework are suitably regular.
It is possible to require that the functions $G$ and $f_i$  (and even $F$) of different
systems are somehow part of (or encoded in) the models. This is easy to accomplish by
suitable encodings. For example, we can easily encode Turing machines into the
models in system domains.
It is possible even to modify these machines on the fly and dictate that if no output is given, or if
there is even a syntax error in the encoding, then some default action is taken.
These kinds of systems can be called \emph{fully material}. The requirement is that $F$, $G$
and each $f_i$ behave as their material counterparts (encoded in the models) would 
dictate. Now systems become 
essentially equivalent to system domains, but of course fixed background 
definitions are needed if we somehow try to specify systems only by giving 
the domains (in one way or another), as otherwise we cannot necessarily
determine the intended evolutionary behaviour of the systems.
While the functions $f_i$ can depend on all of the current model, which is 
natural when modelling perfect information games, it is also natural to define
\emph{perception functions} and make functions $f_i$ depend upon \emph{perceived 
models}. We can let an individual perception function be a
map $\varphi:\mathcal{S} \rightarrow \mathcal{S}'$, where $\mathcal{S}'$ contains
models whose signature may be different from those in $\mathcal{S}$. Now, even if
the input model to $\varphi$ is infinite, the output model can be finite and depend only on
some small part of the input model. We can now dictate that $f_i(x) = f(\varphi(x))$
for each input $x$, where $f$ gives an action in $A$. For a concrete example, $\varphi$ 
could be a first-order reduction, more or less in the sense of descriptive complexity, that gives a 
very crude, finite approximation of the original model. Note that agents' 
epistemic states can be somehow part of the original models, so agents can 
take these into account up to one extent or another.
It is also natural and easy to tie the functions $f$ to \emph{material bodies}. These
can be modelled by, e.g., specially reserved predicate symbols or some more
general constructions. The perception functions can be relativised to depend on 
only the substructures that the bodies of agents (almost) cover. Note that there is no
problem in letting the bodies of different agents overlap. It is natural (but by no means
necessary) to let the encodings of the machines that govern functions $f_i$  to be
part of the bodies of the related agents.

While cellular automata are an ok starting point for digital physics, the above described
approach (and its numerous trivial variants) offer a much richer modelling framework for 
related formal approaches to physics.\footnote{Of course one of the first ideas is to keep 
more or less all functions computable or semi-computable. Partial 
functions can be suitably accommodated into the system of course.}
The metaphysical setting of such formal frameworks
offers a lot of explanatory power for the deeper level nature of physical (and other) phenomena.
The interplay of the supposedly mental constructs ($G$ and each $f_i$) with the material 
parts is obviously highly interesting. The fully formal nature of the systems will simply \emph{force} 
new concepts and insights to emerge as the result of concrete modelling attempts.
Importantly, it is not at all necessary to always keep everything computable or recursively 
enumerable, even though such limitations are an obviously 
interesting and important case. Nor is there any reason to force entities (agents) to be 
somehow \emph{local} in models.

As suggested in \cite{antti15kuu}, extensions of the Turing-complete logic can be
naturally used as logics to guide such systems (when using semi-computable functions).

\subsection{Hierarchical approaches}
Here we look at ways of dealing with modalities.
There are various \emph{prima facie reasonable} ways to 
proceed. Let us begin with one.
Let $\mathcal{M}$ be a set of pointed Kripke models. (By fixing some variable, say $x$, 
sets of Kripke models can be associated with model sets: a pointed model $(\mathfrak{M},w)$ is
identified with $(\mathfrak{M},f)$ where $f(x) = w$.)
Let $(P_\alpha)_{\alpha\in\mathbb{Z}_+}$ be a
sequence such that
\begin{enumerate}
\item
$P_1 \subseteq \mathcal{M}$.
\item
$P_{\alpha+1} \subseteq \mathcal{P}(P_{\alpha})$.
\footnote{Here $\mathcal{P}$ is 
the power set operator, so $\mathcal{P}(S_{\alpha})$ is the power set of $S_{\alpha}$.}
\end{enumerate}
We call $P_{\alpha}$ a
\emph{perspective} of rank $\alpha$. It makes sense to
not allow the empty set to belong to perspectives. We 
thus do so here.
Consider the language of modal logic,
$$\varphi\ ::=\ P\ |\ (\varphi\wedge\varphi)\ |\ \neg\varphi\ |\ \Diamond\varphi\ $$
where $P$ is any unary predicate (i.e., a proposition symbol) in the signature of
the models of $\mathcal{M}$.
Define the \emph{rank} $r(\varphi)$ of a formula $\varphi$ in 
the natural way as follows.
\begin{enumerate}
\item
$r(\chi) = 0$ for each atomic formula $\chi$.
\item
$r(\neg\chi) = r(\chi)$.
\item
$r(\chi\wedge\psi) = \mathit{max}\{r(\chi),r(\psi)\}$.
\item
$r(\Diamond\varphi) = r(\varphi) + 1$.
\end{enumerate}
A perspective $P_{\alpha}$ of rank $\alpha$ can interpret
formulae of rank $\alpha$ or less with
the following semantics.
%

%
%

%
Assume first that $r(\varphi) < \alpha$. Then 
\[
\begin{array}{ll}
P_{\alpha}\models \varphi\
&\text{ iff }\ P_{\alpha - 1}\models \varphi \text{ for all }
P_{\alpha - 1}\in P_{\alpha}\\
%
%
%
%
\end{array}
\]
Note that if $\alpha = 1$, then $P_{\alpha - 1}$ is a 
pointed model in $P_{\alpha}$. Then the above
holds with the natural definition that 
$P_{\alpha - 1} = (\mathfrak{M},w) \models \varphi$
iff we have $(\mathfrak{M},w)\models_K \varphi$ 
where $\models_K$ is the standard semantic
turnstile of Kripke semantics.
Assume then that $r(\neg \psi) = r(\chi\wedge \psi) = r(\Diamond \varphi) = \alpha$. Then 
\[
\begin{array}{ll}
P_{\alpha}\models (\chi \wedge \psi)
&\text{ iff }\ P_{\alpha}\models \chi
\text{ and }P_{\alpha}\models \psi\\
P_{\alpha}\models \neg\psi
&\text{ iff }\ P_{\alpha}\not\models \psi\\
P_{\alpha}\models \Diamond\varphi
&\text{ iff }\ P_{\alpha - 1}\models \varphi\text{ for some }
P_{\alpha - 1}\in P_{\alpha}
\\
\end{array}
\]

\subsubsection{First-order modal logic and beyond}\label{fossst}

We then consider first-order modal logic and beyond. The approach is very similar to
the one above, but we
spell it out anyway, because we can.
Let $\mathcal{M}$ be a model set, i.e., a set of pairs $(\mathfrak{M},f)$ 
where $f$ is some assignment for $\mathfrak{M}$, interpreting
first-order variables.
Let $(P_\alpha)_{\alpha\in\mathbb{Z}_+}$ be a
sequence such that
\begin{enumerate}
\item
$P_1 \subseteq \mathcal{M}$.
\item
$P_{\alpha+1} \subseteq \mathcal{P}(P_{\alpha})$,
where $\mathcal{P}$ is again
the power set operator.
\end{enumerate}
We call $P_{\alpha}$ a
\emph{perspective} of rank $\alpha$. Again it often makes sense to 
not allow the empty set to belong to or be a perspective. Also, it is 
relatively natural to require the \emph{domain} of all the models in the
model set $\mathcal{M}$ to be the same. A perspective $P_{\alpha}$ is
called \emph{regular} if it is built from a model set where all models 
have the same domain. It is \emph{strongly regular} if we also have $P_{\alpha'}\not=\emptyset$
for all perspectives $P_{\alpha'}$ that are part of $P_{\alpha}$ on the 
different rank levels of $P_{\alpha}$ (with
the perspective $P_{\alpha}$ itself also nonempty).
Below we study strongly regular perspectives only.
Consider the language of first-order modal logic,
$$\varphi\ ::=\ Rx_1\dots x_k\ |\ (\varphi\wedge\varphi)\ |\ \neg\varphi\ |\
\exists x \varphi\ |\ \Diamond\varphi\ $$
where $Rx_1\dots x_k$ is any atom.
Define the \emph{rank} $r(\varphi)$ of a formula $\varphi$ as follows.
\begin{enumerate}
\item
$r(\chi) = 0$ for each atomic formula $\chi$.
\item
$r(\neg\chi) = r(\chi)$.
\item
$r(\chi\wedge\psi) = \mathit{max}\{r(\chi),r(\psi)\}$.
\item
$r(\exists x \varphi) = r(\varphi)$.
\item
$r(\Diamond\varphi) = r(\varphi) + 1$.
\end{enumerate}
A perspective $P_{\alpha}$ of rank $\alpha$ can interpret
formulae of depth $\alpha$ or less with
the following semantics.
%

%
%

%
Assume first that $r(\varphi) < \alpha$. Then 
\[
\begin{array}{ll}
P_{\alpha}\models \varphi\
&\text{ iff }\ P_{\alpha - 1}\models \varphi \text{ for all }
P_{\alpha - 1}\in P_{\alpha}\\
\end{array}
\]
Note that if $\alpha = 1$, then $P_{\alpha - 1}$ is a 
model $(\mathfrak{M},f)$ in $P_{\alpha}$. Then the above
holds with the natural additional definition that 
$(\mathfrak{M},f) \models \varphi$
iff $(\mathfrak{M},f)\models_{\mathrm{FO}} \varphi$
where $\models_{\mathrm{FO}}$ is the
standard semantic turnstile of $\mathrm{FO}$.
Assume then that $r(\neg\psi)
= r(\chi\wedge \psi) = r(\Diamond\varphi) = \alpha\geq 1$. Then 
\[
\begin{array}{ll}
P_{\alpha}\models (\chi \wedge \psi)
&\text{ iff }\ P_{\alpha}\models \chi
\text{ and }P_{\alpha}\models \psi\\
P_{\alpha}\models \neg\psi
&\text{ iff }\ P_{\alpha}\not\models \psi\\
P_{\alpha}\models \Diamond\varphi
&\text{ iff }\ P_{\alpha - 1}\models\varphi\text{ for some } P_{\alpha - 1}\in P_{\alpha}\\
\end{array}
\]
%
%
%

%
%
%

If $P$ is a perspective,
we let $P[a / x]$ denote the perspective where each model $(\mathfrak{M},f)$
existing on the rank $0$ level of $P$ is replaced by $(\mathfrak{M},f[a/x])$.
By the \emph{model domain} of $P$ we
refer to the domain of the models the (regular) perpective $P$ is built with.
%
%
%
%
%
%
Assume $\exists x\varphi$ and $P$ have the same rank.
We define
\begin{multline*}P\models \exists x \varphi\text{ iff }\
{P}[a/x]
\models\varphi\
\text{for some $a$ in the model domain of $P$.}
\end{multline*}
%
%
%
%
%
%
%
%
%
%

\subsubsection{Propositional modal logic, a new take}\label{prop2}

Here we look at further ways of dealing with modalities.
Perspectives are as above for propositional modal logic.
(Thus we can flexibly talk about both pairs $(\mathfrak{M},f)$ and
pairs $(\mathfrak{M},w)$ in the discussions.)

Consider the following language of modal logic:
$$\varphi\ ::=\ P\ |\ (\varphi\wedge\varphi)\ |\ 
\ (\varphi\vee\varphi)\ |\ \neg\varphi\ |\ \Diamond\varphi\ $$
where $P$ is any unary predicate (i.e., a proposition symbol) in the signature of
the models of $\mathcal{M}$. This time we are \emph{not} going to
consider $\vee$ to be defined in terms of $\wedge$ in the usual way.
Define the rank of formulae as before, with $r(\chi\vee\psi)$
being defined the same as $r(\chi\wedge\psi)$.
A perspective $P_{\alpha}$ of rank $\alpha$ can again interpret
formulae of depth $\alpha$ or less with
the following semantics.
%

%
%

%
Assume first that $r(\varphi) < \alpha$. Then 
\[
\begin{array}{ll}
P_{\alpha}\models^+ \varphi\
&\text{ iff }\ P_{\alpha - 1}\models^+ \varphi \text{ for all }
P_{\alpha - 1}\in P_{\alpha}\\
P_{\alpha}\models^- \varphi\
&\text{ iff }\ P_{\alpha - 1}\models^{-} \varphi \text{ for all }
P_{\alpha - 1}\in P_{\alpha}\\
\end{array}
\]
Note that if $\alpha = 1$, then $P_{\alpha - 1}$ is a 
pointed model in $P_{\alpha}$. Then the above
holds with the natural definition that 

\[
\begin{array}{ll}
(\mathfrak{M},w) \models^+ \varphi\
&\text{ iff }\ (\mathfrak{M},w)\models \varphi\\
(\mathfrak{M},w) \models^- \varphi\
&\text{ iff }\ (\mathfrak{M},w)\not\models \varphi\\
\end{array}
\]
where $\models$ is the basic turnstile of Kripke semantics.
We then make the following auxiliary definition. Assume $\chi$ is of
some rank $\alpha_{\chi} < \alpha$. Consider a perspective $P_{\alpha}$.
We define $P_{\alpha}\upharpoonright\chi$ to be
the perspective (or rank $\alpha$) that 
can be obtained from $P_{\alpha}$ 
by removing each perspective $Q_{\alpha_{\chi}}$ from the rank $\alpha_{\chi}$
level such that $Q_{\alpha_{\chi}}\not\models^+\chi$. More formally, we
define $P_{\alpha}\upharpoonright\chi$ as follows.
Recall that $P_{\alpha}$ is defined inductively with the condition that
\begin{enumerate}
\item
$P_1 \subseteq \mathcal{M}$,
\item
$P_{\beta+1}\subseteq \mathcal{P}(P_{\beta})$.
\end{enumerate}
Define a new sequence $(Q_{\beta})_{1\leq\beta\leq\alpha}$
\begin{enumerate}
\item
If $\alpha_{\chi} = 0$, then $Q_1 = \{\, (\mathfrak{M},f)\in P_1\, |\,
(\mathfrak{M},f)\models \chi\ \}$. Otherwise $Q_1 = P_1$.
\item
Suppose we have 
defined $Q_{\beta}$.
\begin{enumerate}
\item
If $\alpha_{\chi} > \beta$,
then $Q_{\beta + 1} = P_{\beta + 1}$.
\item
If $\alpha_{\chi} = \beta$,
then $Q_{\beta + 1} = \{\, R_{\beta}\in P_{\beta + 1}\, |\,
R_{\beta}\models^+ \chi\ \}$.
\item
If $\alpha_{\chi} < \beta$,
then $Q_{\beta + 1} = \{\, R_{\beta}\upharpoonright\chi\, |\,
R_{\beta}\in P_{\beta+1},\ R_{\beta}\upharpoonright\chi\not=\emptyset\, \}$.
\end{enumerate}
\end{enumerate}
 We define $P_{\alpha}\upharpoonright\chi = Q_{\alpha}$.
We also define $P_{\alpha}\upharpoonright\overline{\chi}$ to
be equal to $P_{\alpha}\upharpoonright {\sim\chi}$ where $\sim$
denotes classical negation, i.e., if $\chi$ is of rank $0$,
then $(\mathfrak{M},f)\models {\sim\chi}$ iff
$(\mathfrak{M},f)\not\models \chi$, and if $\chi$ of rank $\beta$,
then $P_{\beta}\models^+ \sim\chi$ iff $P_{\beta}\not\models^+ \chi$.
Assume then that $r(\Diamond\varphi) = r(\psi)
= \alpha$ and $r(\chi) < \alpha$. Then 
\[
\begin{array}{ll}
P_{\alpha}\models^+ (\chi \wedge \psi)
&\text{ iff }\ P_{\alpha}\models^+ \chi
\text{ and }P_{\alpha}\models^+ \psi\\
P_{\alpha}\models^+ (\psi\wedge\chi)
&\text{ iff }\ P_{\alpha}\models^+ \psi
\text{ and }P_{\alpha}\models^+ \chi\\
P_{\alpha}\models^+ (\chi \vee \psi)
&\text{ iff }\
\bigl(\ (P_{\alpha}\upharpoonright\overline{\chi})
\models^+  \psi\ 
\ \ \ \text{ or }\ \ \ (P_{\alpha}\upharpoonright\overline{\chi})\ =\ \emptyset\ \bigr)\\
P_{\alpha}\models^+ (\psi\vee\chi)
&\text{ iff }\
\bigl(\ (P_{\alpha}\upharpoonright\overline{\chi})
\models^+  \psi\ 
\ \ \ \text{ or }\ \ \ (P_{\alpha}\upharpoonright\overline{\chi})\ =\ \emptyset\ \bigr)\\
P_{\alpha}\models^+ \neg\psi
&\text{ iff }\ P_{\alpha}\models^- \psi\\
P_{\alpha}\models^+ \Diamond\varphi
&\text{ iff }\ P_{\alpha - 1}\models^+\varphi\text{ for some }
P_{\alpha - 1}\in P_{\alpha}
\\
P_{\alpha}\models^- (\chi \wedge \psi)
&\text{ iff }\ P_{\alpha}\models^- \chi
\text{ or }P_{\alpha}\models^- \psi\\
P_{\alpha}\models^- (\psi \wedge \chi)
&\text{ iff }\ P_{\alpha}\models^- \psi
\text{ or }P_{\alpha}\models^- \chi\\
P_{\alpha}\models^- (\chi \vee \psi)
&\text{ iff }\
\bigl(\ (P_{\alpha}\upharpoonright\overline{\chi})
\models^-  \psi\ 
\ \ \ \text{ and }\ \ \ (P_{\alpha}\upharpoonright\overline{\chi})\ \not=\ \emptyset\ \bigr)\\
P_{\alpha}\models^- (\psi\vee\chi)
&\text{ iff }\
\bigl(\ (P_{\alpha}\upharpoonright\overline{\chi})
\models^-  \psi\ 
\ \ \ \text{ and }\ \ \ (P_{\alpha}\upharpoonright\overline{\chi})\ \not=\ \emptyset\ \bigr)\\
P_{\alpha}\models^- \neg\psi
&\text{ iff }\ P_{\alpha}\models^+ \psi\\
P_{\alpha}\models^- \Diamond\varphi
&\text{ iff }\ P_{\alpha - 1}\models^-\varphi\text{ for all } P_{\alpha - 1}\in P_{\alpha}\\
\end{array}
\]
Assume then that $r(\chi) = r(\psi) = \alpha$. Then
\[
\begin{array}{ll}
P_{\alpha}\models^+ (\chi \wedge \psi)
&\text{ iff }\ P_{\alpha}\models^+ \chi
\text{ and }P_{\alpha}\models^+ \psi\\
P_{\alpha}\models^- (\chi\wedge\psi)
&\text{ iff }\ P_{\alpha}\models^- \chi
\text{ or } P_{\alpha}\models^- \psi\\
P_{\alpha}\models^+ (\chi \vee \psi)
&\text{ iff }\ P_{\alpha}\models^+ \chi
\text{ or }P_{\alpha}\models^+ \psi\\
P_{\alpha}\models^- (\chi\vee\psi)
&\text{ iff }\ P_{\alpha}\models^- \chi
\text{ and } P_{\alpha}\models^- \psi\\
\end{array}
\]

Now add an implication $\rightarrow$ to then language,
with $r(\varphi\rightarrow\psi)=r(\varphi\vee\psi)$. The semantics of $\chi\rightarrow\psi$ in
relation to $P_{\alpha}$ is defined as for other connectives when $r(\varphi\vee\psi) < \alpha$.
For the case where $r(\chi) < \alpha = r(\psi)$, it is quite natural to define
that $P_{\alpha}\models^+ (\chi \rightarrow \psi)$
iff $(P_{\alpha}\upharpoonright{\chi})
\models^+  \psi$ or $(P_{\alpha}\upharpoonright{\chi})\ =\ \emptyset$.
The negative clause would be $P_{\alpha}\models^- (\chi \rightarrow \psi)$
iff $(P_{\alpha}\upharpoonright{\chi})
\models^-  \psi$ and $(P_{\alpha}\upharpoonright{\chi})\ \not=\ \emptyset$.
For the case $r(\chi) = \alpha$, we put $P_{\alpha}\models^+ (\chi \rightarrow \psi)$
iff $P_{\alpha}\models^- \chi$ or 
$P_{\alpha}\models^+  \psi$ and also $P_{\alpha}\models^- (\chi \rightarrow \psi)$
iff $P_{\alpha}\models^+ \chi$ and 
$P_{\alpha}\models^-  \psi$. It is not entirely unnatural to define some kind of a 
diamond (i.e., $\diamond\varphi$) by the formula $\neg(\varphi \rightarrow \bot)$.
Here $\bot$ is $p\wedge\neg p$.

\subsubsection{First-order modal logic and beyond, a new take}\label{fossst2}

We then consider first-order modal logic and beyond. The approach is very similar to
the one for propositional modal logic above, but we
spell it out anyway, because we can.
Let $\mathcal{M}$ be a model set, i.e., a set of pairs $(\mathfrak{M},f)$ 
where $f$ is some assignment for $\mathfrak{M}$, interpreting
first-order variables.
Let $(P_\alpha)_{\alpha\in\mathbb{Z}_+}$ be a
sequence such that
\begin{enumerate}
\item
$P_1 \subseteq \mathcal{M}$.
\item
$P_{\alpha+1} \subseteq \mathcal{P}(P_{\alpha})$,
where $\mathcal{P}$ is again
the power set operator.
\end{enumerate}
We call $P_{\alpha}$ a
\emph{perspective} of rank $\alpha$. Again it often makes sense to 
not allow the empty set to belong to or be a perspective. Also, it is 
relatively natural to require the \emph{domain} of all the models in the
model set $\mathcal{M}$ to be the same. A perspective $P_{\alpha}$ is
called \emph{regular} if it is built from a model set where all models 
have the same domain. It is \emph{strongly regular} if we also have $P_{\alpha'}\not=\emptyset$
for all perspectives $P_{\alpha'}$ that are part of $P_{\alpha}$ on the 
different rank levels of $P_{\alpha}$ (with
the perspective $P_{\alpha}$ itself also nonempty).
Below we study strongly regular perspectives only.
Consider the language of first-order modal logic,
$$\varphi\ ::=\ Rx_1\dots x_k\ |\ (\varphi\wedge\varphi)\
|\ \ (\varphi\vee\varphi)\ |\ \neg\varphi\ |\
\exists x \varphi\ |\ \Diamond\varphi\ $$
where $Rx_1\dots x_k$ is any atom.
Define the \emph{rank} $r(\varphi)$ of a formula $\varphi$ as follows.
\begin{enumerate}
\item
$r(\chi) = 0$ for each atomic formula $\chi$.
\item
$r(\neg\chi) = r(\chi)$.
\item
$r(\chi\wedge\psi) = \mathit{max}\{r(\chi),r(\psi)\}$.
\item
$r(\chi\vee\psi) = \mathit{max}\{r(\chi),r(\psi)\}$.
\item
$r(\exists x \varphi) = r(\varphi)$.
\item
$r(\Diamond\varphi) = r(\varphi) + 1$.
\end{enumerate}
A perspective $P_{\alpha}$ of rank $\alpha$ can interpret
formulae of depth $\alpha$ or less with
the following semantics.
%

%
%

%
Assume first that $r(\varphi) < \alpha$. Then 
\[
\begin{array}{ll}
P_{\alpha}\models \varphi\
&\text{ iff }\ P_{\alpha - 1}\models \varphi \text{ for all }
P_{\alpha - 1}\in P_{\alpha}\\
\end{array}
\]
Note that if $\alpha = 1$, then $P_{\alpha - 1}$ is a 
model $(\mathfrak{M},f)$ in $P_{\alpha}$. Then the above
holds with the natural additional definition that 
$(\mathfrak{M},f) \models \varphi$
iff $(\mathfrak{M},f)\models_{\mathrm{FO}} \varphi$
where $\models_{\mathrm{FO}}$ is the
standard semantic turnstile of $\mathrm{FO}$.
Now, we define $(P_{\alpha}\upharpoonright\overline{\chi})$ analogously to
the definition given in Section \ref{prop2}. (In fact, the definition there can \emph{as it
stands} be read as a definition for model sets in predicate logic as well as 
propositional modal logic.)
Assume then that $r(\Diamond\varphi) = r(\psi)
= \alpha$ and $r(\chi) < \alpha$. Then 
\[
\begin{array}{ll}
P_{\alpha}\models^+ (\chi \wedge \psi)
&\text{ iff }\ P_{\alpha}\models^+ \chi
\text{ and }P_{\alpha}\models^+ \psi\\
P_{\alpha}\models^+ (\psi\wedge\chi)
&\text{ iff }\ P_{\alpha}\models^+ \psi
\text{ and }P_{\alpha}\models^+ \chi\\
P_{\alpha}\models^+ (\chi \vee \psi)
&\text{ iff }\
\bigl(\ (P_{\alpha}\upharpoonright\overline{\chi})
\models^+  \psi\ 
\ \ \ \text{ or }\ \ \ (P_{\alpha}\upharpoonright\overline{\chi})\ =\ \emptyset\ \bigr)\\
P_{\alpha}\models^+ (\psi\vee\chi)
&\text{ iff }\
\bigl(\ (P_{\alpha}\upharpoonright\overline{\chi})
\models^+  \psi\ 
\ \ \ \text{ or }\ \ \ (P_{\alpha}\upharpoonright\overline{\chi})\ =\ \emptyset\ \bigr)\\
P_{\alpha}\models^+ \neg\psi
&\text{ iff }\ P_{\alpha}\models^- \psi\\
P_{\alpha}\models^+ \Diamond\varphi
&\text{ iff }\ P_{\alpha - 1}\models^+\varphi\text{ for some }
P_{\alpha - 1}\in P_{\alpha}
\\
P_{\alpha}\models^- (\chi \wedge \psi)
&\text{ iff }\ P_{\alpha}\models^- \chi
\text{ or }P_{\alpha}\models^- \psi\\
P_{\alpha}\models^- (\psi \wedge \chi)
&\text{ iff }\ P_{\alpha}\models^- \psi
\text{ or }P_{\alpha}\models^- \chi\\
P_{\alpha}\models^- (\chi \vee \psi)
&\text{ iff }\
\bigl(\ (P_{\alpha}\upharpoonright\overline{\chi})
\models^-  \psi\ 
\ \ \ \text{ and }\ \ \ (P_{\alpha}\upharpoonright\overline{\chi})\ \not=\ \emptyset\ \bigr)\\
P_{\alpha}\models^- (\psi\vee\chi)
&\text{ iff }\
\bigl(\ (P_{\alpha}\upharpoonright\overline{\chi})
\models^-  \psi\ 
\ \ \ \text{ and }\ \ \ (P_{\alpha}\upharpoonright\overline{\chi})\ \not=\ \emptyset\ \bigr)\\
P_{\alpha}\models^- \neg\psi
&\text{ iff }\ P_{\alpha}\models^+ \psi\\
P_{\alpha}\models^- \Diamond\varphi
&\text{ iff }\ P_{\alpha - 1}\models^-\varphi\text{ for all } P_{\alpha - 1}\in P_{\alpha}\\
\end{array}
\]
Assume then that $r(\chi) = r(\psi) = \alpha$. Then
\[
\begin{array}{ll}
P_{\alpha}\models^+ (\chi \wedge \psi)
&\text{ iff }\ P_{\alpha}\models^+ \chi
\text{ and }P_{\alpha}\models^+ \psi\\
P_{\alpha}\models^- (\chi\wedge\psi)
&\text{ iff }\ P_{\alpha}\models^- \chi
\text{ or } P_{\alpha}\models^- \psi\\
P_{\alpha}\models^+ (\chi \vee \psi)
&\text{ iff }\ P_{\alpha}\models^+ \chi
\text{ or }P_{\alpha}\models^+ \psi\\
P_{\alpha}\models^- (\chi\vee\psi)
&\text{ iff }\ P_{\alpha}\models^- \chi
\text{ and } P_{\alpha}\models^- \psi\\
\end{array}
\]
%

If $P$ is a perspective,
we let $P[a / x]$ denote the perspective where each model $(\mathfrak{M},f)$
existing on the rank $0$ level of $P$ is replaced by $(\mathfrak{M},f[a/x])$.
By the \emph{model domain} of $P$ we
refer to the domain of the models the (regular) perpective $P$ is built with.
%
%
%
%
%
%
Assume $\exists x\varphi$ and $P$ have the same rank.
We define
\begin{multline*}P\models^+ \exists x \varphi\text{ iff }\
{P}[a/x]
\models^+\varphi\
\text{for some $a$ in the model domain of $P$.}
\end{multline*}
and
\begin{multline*}P\models^- \exists x \varphi\text{ iff }\
{P}[a/x]
\models^-\varphi\
\text{for all $a$ in the model domain of $P$.}
\end{multline*}
%
%
%
%
%
%


The implication can of course be added to the picture with the same semantics as
given above for propositional modal logic.

Now consider the more general language 
$$\varphi\ ::=\ Rx_1\dots x_k\ |\ (\varphi\vee\varphi)\ |\ (\varphi\wedge\varphi)\ |\ \neg\varphi\ |\
Q x\varphi\ |\ \langle Q \rangle\varphi\ $$
where each $Q$ belongs to some symbol set $\mathcal{Q}$.
Both $Qx$ and $\langle Q\rangle$ will be associated 
with generalized quantifiers, so the language 
corresponding to $\mathcal{Q}$ will contain a 
generalized quantifier\footnote{Or a 
\emph{minor quantifier} \cite{double}, to be exact.} $Qx$ and a
generalized modality $\langle Q \rangle$
for each symbol $Q\in \mathcal{Q}$. The rank of a formula is measured in
terms of the possibly different operators $\langle Q\rangle$,
each application adding to the rank.

%

Recall here the definition of a unary generalized quantifier
(see \cite{double} for the (standard) definition we shall use).
Let $U$ be a unary generalized quantifier. We
let $\overline{U}$ denote $\{ (A,S)\, |\, (A,S)\not\in U\, \}$.

Let $U$ be a unary generalized quantifier.
Consider a class $\mathcal{C}$ of
structures $(A, B^+,B^-)$ such that 
the following conditions hold.
\begin{enumerate}
\item
$A\not=\emptyset$
\item
$B^+,B^-\subseteq A$
\item
$B^+\cap B^-\ = \emptyset$
\item
$\mathcal{C}$ is closed under isomorphism, that is, if $f:A\rightarrow A'$ is 
an isomorphism from $(A, B^+,B^-)\in \mathcal{C}$ to $(A, C^+,C^-)$
then $(A, C^+,C^-)\in \mathcal{C}$.
\item
For each $(A, B^+,B^-)\in \mathcal{C}$, there is a pair $(A,H)\in U$ 
such that $B^+\subseteq H$ and $B^-\subseteq A\setminus H$.
\item
For each $(A, B^+,B^-)\in \mathcal{C}$, there does \emph{not}
exists a pair $(A,H)\in \overline{U}$ 
such that $B^+\subseteq H$ and $B^-\subseteq A\setminus H$.
\item
For each $(A,H)\in U$, there exists a pair $(A,B^+,B^-)\in \mathcal{C}$ 
such that $B^+\subseteq H$ and $B^-\subseteq A\setminus H$.
\end{enumerate}
Then we say that $\mathcal{C}$ \emph{witnesses} $U$.
A \emph{minor quantifier} based on $U$ is a pair $Q = (\mathcal{C},\mathcal{D})$ such
that $\mathcal{C}$ witnesses $U$ and $\mathcal{D}$ witnesses $\overline{U}$.
We denote $\mathcal{C}$ by $Q_+$ and $\mathcal{D}$ by $Q_-$.
We fix the lastly defined semantics for modal predicate logic given above (in this 
Section), with only the 
clauses for $Q x$ and $\langle Q\rangle$ redifined.
Recall that the semantics for predicate logic with quantifiers $Q x$ on ordinary 
first-order models is defined, e.g., in \cite{double}.
Suppose $\langle Q\rangle\varphi$ and $P$ have the same rank. We define that 
$$P_{\alpha}\models^+\langle Q\rangle\varphi
\text{ iff } (P_{\alpha},P^+,P^-)\in Q_+$$
where $P^+\subseteq P_{\alpha}$ is some
set of perspectives $P_{\alpha-1}\in P_{\alpha}$
such that $P_{\alpha-1}\models^+\varphi$
and $P^-\subseteq P_{\alpha}$ is some set of
perspectives $Q_{\alpha-1}\in P_{\alpha}$
such that $Q_{\alpha-1}\models^-\varphi$.
Similarly,
$$P_{\alpha}\models^-\langle Q\rangle\varphi
\text{ iff } (P_{\alpha},P^+,P^-)\in Q_-$$
where again $P^+\subseteq P_{\alpha}$ is some 
set of perspectives $P_{\alpha-1}\in P_{\alpha}$
such that $P_{\alpha-1}\models^+\varphi$
and $P^-\subseteq P_{\alpha}$ some set of
perspectives $Q_{\alpha-1}\in P_{\alpha}$
such that $Q_{\alpha-1}\models^-\varphi$.
Now let $M$ denote the model domain of $P$. We define
\begin{multline*}P\models^+ Qx \varphi\text{ iff }
\text{for some $(M,S,T)\in Q_+$, we have}\\
{P}[a/x]
\models^+\varphi\text{ for all }a\in S\text{ and}\\
{P}[a/x]
\models^-\varphi\text{ for all }a\in T.\\
\end{multline*}
Also,
\begin{multline*}P\models^- Qx \varphi\text{ iff }
\text{for some $(M,S,T)\in Q_-$, we have}\\
{P}[a/x]
\models^+\varphi\text{ for all }a\in S\text{ and}\\
{P}[a/x]
\models^-\varphi\text{ for all }a\in T.\\
\end{multline*}

It is easy to see how to generalize all this to formulae with more
general generalized (minor) quantifiers. For example, $\langle Q\rangle(\varphi,\psi)$
with $Q$ indicating that more elements of the 
perspective satisfy $\varphi$ than $\psi$, is interesting.


%
\subsection{More on perspectives}
It is natural to define an implication equivalent to $\Box(\neg \varphi\vee \psi)$,
where $\Box$ is $\neg\Diamond\neg$. Also, it is possible to 
consider the approach $P\models\varphi\Rightarrow \psi$ iff
$P'\models \psi$, where $P'\subseteq P$ contains those $Q\in P$
such that $Q\models\varphi$. Note that $\Rightarrow$ will not
alter the rank but the implication with $\Box(\neg \varphi\vee \psi)$ will.

It is also very natural to go
multimodal. This is easy by replacing perspectives by pairs with a 
perspective and a label. The label denotes the agent (or whatever). The 
perspective itself can contain sets with different labels, but the notion of 
rank of course has to be adjusted. It is not difficult to model these 
approaches in Kripke semantics (by creating sets, i.e., perspectives, with
the help of $\Box$). 
Suppose $x$ has some value in $\mathbb{N}$.
Consider the following reasoning scenario. If $x$ is even, then it is not 
possible that $x$ is odd. Thus, if $x$ is even, it is not the 
case that it is possible that $x$ is odd and it is possible that $x$ is even.
Writing this in symbols (without any particular semantics fixed), we get
``$x\text{ is even}\rightarrow\neg(\Diamond(x\text{ is odd})\wedge \Diamond(x\text{ is even}))$''. We symmetrically conclude that
``$x\text{ is odd}\rightarrow\neg(\Diamond(x\text{ is odd})\wedge \Diamond(x\text{ is even}))$''.
Let us abbreviate these statements by $\varphi_{even}\rightarrow \neg \chi$ and
$\varphi_{odd}\rightarrow \neg \chi$. Now, supposing we 
can deduce $C$ from $A\vee B$, $A\rightarrow C$, $B\rightarrow C$, we
deduce $\neg \chi$ from $\varphi_{even}\vee\varphi_{odd}$.
That is, we deduce $\neg(\Diamond(x\text{ is odd})\wedge \Diamond(x\text{ is even}))$ because
surely $x$ is even or odd.
However, it is clear that $(\Diamond(x\text{ is odd})\wedge \Diamond(x\text{ is even}))$ holds.
One way to reject the faulty 
reasoning is to assert that epistemic and metaphysical modalities are mixed up.
Indeed, we can reject ``$x\text{ is even}\rightarrow\neg(\Diamond(x\text{ is odd}))$''
if we read $\Diamond$ as ``it appears to be possible that.'' But if we
only wish to consider possible scenarios without there being an 
actual world (which would be unknown but would give $x$
some value), we can reject the 
deduction rule ``$C$ from $A\vee B$, $A\rightarrow C$, $B\rightarrow C$,'' as
the above ssemantics does if $\rightarrow$ is 
given the interpretation of $\Rightarrow$ we defined
above. This is because $A\vee B$ does not 
mean that $A$ is surely true or $B$ is surely true, simply that the space of 
scenarios splits so that $A$ is the case in the 
first and $B$ in the second scenario.
\bibliographystyle{plain}
\bibliography{aaabibfile}

\end{document}